\documentclass[12pt]{amsart}
\usepackage{amsfonts,amssymb,amscd,amsmath,enumerate,verbatim}
\usepackage[latin1]{inputenc}
\usepackage{amscd}
\usepackage{latexsym}
\usepackage{mathptmx}
\usepackage{multicol}
\input xy
\xyoption{all}
\usepackage{hyperref}
\usepackage{color}
%

%
%
%
%
%
\def\NZQ{\mathbb}               

\def\ZZ{{\NZQ Z}}
\def\RR{{\NZQ R}}

%
%
\def\frk{\mathfrak}               

\def\Phi{{\frk N}}
%
%

\def\wb{{\bold w}}


%
\def\opn#1#2{\def#1{\operatorname{#2}}} 
%
\opn\chara{char} 
\opn\length{\ell} 
\opn\pd{pd} 
\opn\rk{rk}
\opn\projdim{proj\,dim} 
\opn\injdim{inj\,dim} 
\opn\rank{rank}
\opn\depth{depth} 
\opn\grade{grade} 
\opn\height{height}
\opn\embdim{emb\,dim} 
\opn\codim{codim}

\opn\Tr{Tr} 
\opn\bigrank{big\,rank}
\opn\superheight{superheight}
\opn\lcm{lcm}
\opn\trdeg{tr\,deg}
\opn\reg{reg} 
\opn\lreg{lreg} 
\opn\ini{in} 
\opn\lpd{lpd}
\opn\size{size}
\opn\mult{mult}
\opn\dist{dist}
\opn\cone{cone}
\opn\lex{lex}
\opn\rev{rev}
\opn\bipyr{bipyr}
%
\opn\div{div} \opn\Div{Div} \opn\cl{cl} \opn\Cl{Cl}
%
%
\opn\Spec{Spec} \opn\Supp{Supp} \opn\supp{supp} \opn\Sing{Sing}
\opn\Ass{Ass} \opn\Min{Min}
%
%
\opn\Ann{Ann} \opn\Rad{Rad} \opn\Soc{Soc}
%
%
\opn\Syz{Syz} \opn\Im{Im} \opn\Ker{Ker} \opn\Coker{Coker}
\opn\Am{Am} \opn\Hom{Hom} \opn\Tor{Tor} \opn\Ext{Ext}
\opn\End{End} \opn\Aut{Aut} \opn\id{id} \opn\ini{in}

\opn\nat{nat}
\opn\pff{pf}
\opn\Pf{Pf} \opn\GL{GL} \opn\SL{SL} \opn\mod{mod} \opn\ord{ord}
\opn\Gin{Gin}
\opn\Hilb{Hilb}\opn\adeg{adeg}\opn\std{std}\opn\ip{infpt}
\opn\Pol{Pol}
\opn\sat{sat}
\opn\Var{Var}
\opn\Gen{Gen}

%
%
\opn\aff{aff} \opn\con{conv} \opn\relint{relint} \opn\st{st}
\opn\lk{lk} \opn\cn{cn} \opn\core{core} \opn\vol{vol}
\opn\link{link} \opn\star{star}
\opn\gr{gr}


\def\Pc{{\mathcal P}}

\def\Rc{{\mathcal R}}

\def\vol{{\textnormal{vol}}}
\def\conv{{\textnormal{conv}}}

\def\ord{{\textnormal{ord}}}
%
%

\def\pot#1#2{#1[\kern-0.28ex[#2]\kern-0.28ex]}

%
%
\opn\dirlim{\underrightarrow{\lim}}
\opn\inivlim{\underleftarrow{\lim}}
%
%
%

%
%

\def\Implies{\ifmmode\Longrightarrow \else
	\unskip${}\Longrightarrow{}$\ignorespaces\fi}
\def\implies{\ifmmode\Rightarrow \else
	\unskip${}\Rightarrow{}$\ignorespaces\fi}
\def\iff{\ifmmode\Longleftrightarrow \else
	\unskip${}\Longleftrightarrow{}$\ignorespaces\fi}

\let\:=\colon
\newtheorem{Theorem}{Theorem}[section]

\newtheorem{Remark}[Theorem]{Remark}

\newtheorem{Example}[Theorem]{Example}

\newtheorem{Question}[Theorem]{Question}
\numberwithin{equation}{section}

%
%
\let\epsilon\varepsilon
\let\phi=\varphi
\let\kappa=\varkappa
%
%
\textwidth=15cm \textheight=22cm \topmargin=0.5cm
\oddsidemargin=0.5cm \evensidemargin=0.5cm \pagestyle{plain}
%
%
\def\qed{\ifhmode\textqed\fi
	\ifmmode\ifinner\quad\qedsymbol\else\dispqed\fi\fi}
\def\textqed{\unskip\nobreak\penalty50
	\hskip2em\hbox{}\nobreak\hfil\qedsymbol
	\parfillskip=0pt \finalhyphendemerits=0}
\def\dispqed{\rlap{\qquad\qedsymbol}}

%
\opn\dis{dis}
\opn\height{height}
\opn\dist{dist}
\def\pnt{{\raise0.5mm\hbox{\large\bf.}}}

\opn\Lex{Lex}
\opn\conv{conv}
\opn\Ehr{Ehr}

%


%
%
%
\begin{document}
\title{Depth of an initial ideal}
	\author[T.~Hibi]{Takayuki Hibi}
\address[Takayuki Hibi]{Department of Pure and Applied Mathematics,
	Graduate School of Information Science and Technology,
	Osaka University,
	Suita, Osaka 565-0871, Japan}
\email{hibi@math.sci.osaka-u.ac.jp}
\author{Akiyoshi Tsuchiya}
\address[Akiyoshi Tsuchiya]
{Graduate school of Mathematical Sciences,
University of Tokyo,
Komaba, Meguro-ku, Tokyo 153-8914, Japan} 
\email{akiyoshi@ms.u-tokyo.ac.jp}

\subjclass[2010]{13P10}
\keywords{initial ideal, depth, Gr\"{o}bner basis}
\thanks{The first author was partially supported by JSPS KAKENHI 19H00637.  The second author was partially supported by JSPS KAKENHI 19K14505.}
\begin{abstract}
Given an arbitrary integer $d>0$, we construct a homogeneous ideal $I$ of the polynomial ring $S = K[x_1, \ldots, x_{3d}]$ in $3d$ variables over a filed $K$ for which $S/I$ is a Cohen--Macaulay ring of dimension $d$ with the property that, for each of the integers $0 \leq r \leq d$, there exists a monomial order $<_r$ on $S$ with ${\rm depth} (S/{\rm in}_{<_r}(I)) = r$, where ${\rm in}_{<_r}(I)$ is the initial ideal of $I$ with respect to $<_r$. 
\end{abstract}
\maketitle
\thispagestyle{empty}
\section{Background}
In order to answer a question suggested in \cite[p.~38]{DEP}, the first author \cite[p.~285]{Hibi86} discovered a graded Gorenstein Hodge algebra  whose corresponding discrete Hodge algebra is not a Cohen--Macaulay ring.  In the modern language of Gr\"obner bases and initial ideals, the work guarantees the existence of a homogeneous ideal $I$ of the polynomial ring $S = K[x_1, \ldots, x_n]$ over a field $K$ for which $S/I$ is Cohen--Macaulay with the property that there is an initial ideal ${\rm in}_{<}(I)$ of $I$ for which $S/({\rm in}_{<}(I))$ is not Cohen--Macaulay.  On the other hand, in \cite[Corollary 3.9]{CV}, it is shown that if $A$ is an ASL (algebra with straightening laws \cite{Eis}) and $A_0$ is its discrete ASL, then $\depth A = \depth A_0$.  In particular the discrete ASL of a Cohen--Macaulay ASL is again Cohen--Macaulay. 

Take the above background into consideration, one cannot escape the temptation to study the question as follows:

\begin{Question}
\label{question}
{\em
Given an arbitrary integer $d>0$, does there exist a homogeneous ideal $I$ of the polynomial ring $S$ over a filed $K$ for which $S/I$ is a Cohen--Macaulay ring of dimension $d$ with the property that, for each of the integers $0 \leq r \leq d$, there is a monomial order $<_r$ on $S$ with ${\rm depth} (S/{\rm in}_{<_r}(I)) = r$, where ${\rm in}_{<_r}(I)$ is the initial ideal of $I$ with respect to $<_r$?
}
\end{Question}

The purpose of the present paper is to solve Question \ref{question} and, in addition, to supply related questions.

\section{Result}
We refer the reader to \cite[Chapter 2]{hhGTM} for fundamental materials and standard notation on Gr\"obner bases.  

Let $S=K[x_1,\ldots,x_n]$ denote the polynomial ring in $n$ variables over a field $K$.  Given a vector $\wb=(w_1,\ldots,w_d) \in \ZZ^n$ and a monomial $u=x_1^{a_1}\cdots x_n^{a_n} \in S$, the \textit{weight} of $u$ with respect to $\wb$ is defined to be $a_1w_1+\cdots+a_nw_n$.

\begin{Theorem}
\label{depth}
Given an arbitrary integer $d>0$, there exists a homogeneous ideal $I$ of the polynomial ring $S = K[x_1, \ldots, x_{3d}]$ in $3d$ variables over a filed $K$ for which $S/I$ is a Cohen--Macaulay ring of dimension $d$ with the property that, for each of the integers $0 \leq r \leq d$, there is a monomial order $<_r$ on $S$ with ${\rm depth} (S/{\rm in}_{<_r}(I)) = r$.
\end{Theorem}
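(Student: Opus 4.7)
The plan is to reduce to a single three-variable building block and then take a disjoint ``sum'' of $d$ copies.

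\smallskip

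\emph{Step 1: the building block.} The first task is to exhibit a homogeneous ideal $J\subset T:=K[y_1,y_2,y_3]$ with $T/J$ Cohen--Macaulay of dimension $1$, together with two monomial orders $<^{(0)},<^{(1)}$ on $T$ such that $\depth(T/\ini_{<^{(0)}}(J))=0$ and $\depth(T/\ini_{<^{(1)}}(J))=1$. A natural candidate is the defining ideal of a numerical semigroup curve, for instance the kernel of $K[y_1,y_2,y_3]\to K[t]$ sending $(y_1,y_2,y_3)\mapsto(t^3,t^4,t^5)$, which is a height-two Cohen--Macaulay prime; direct Gr\"obner basis computations with respect to the lexicographic and the reverse lexicographic order (with $y_1>y_2>y_3$) should realise the two depths, the former producing the maximal ideal as an associated prime of the initial ideal.

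\smallskip

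\emph{Step 2: assembly and order combination.} Partition the variables of $S=K[x_1,\dots,x_{3d}]$ into $d$ pairwise disjoint blocks $X_i=\{x_{3i-2},x_{3i-1},x_{3i}\}$, let $J_i\subset T_i:=K[X_i]$ be the image of $J$ under the obvious isomorphism $T\cong T_i$, and set $I=\sum_{i=1}^d J_iS$. Since the blocks involve disjoint variables, $S/I\cong T_1/J_1\otimes_K\cdots\otimes_K T_d/J_d$, so by additivity of depth and dimension over tensor products of finitely generated $K$-algebras, $S/I$ is Cohen--Macaulay of dimension~$d$. For each $0\le r\le d$, equip $r$ of the blocks with the order $<^{(1)}$ and the remaining $d-r$ with $<^{(0)}$, and combine them into a single monomial order $<_r$ on $S$ via a block order (e.g.\ separated by appropriate weight vectors that discriminate the $d$ groups of variables). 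Because the variable sets are disjoint, the S-polynomials of generators belonging to different blocks reduce to zero automatically, so $\ini_{<_r}(I)=\sum_{i=1}^d\ini(J_i)S$; hence $S/\ini_{<_r}(I)$ decomposes as the analogous tensor product, whose depth is $r\cdot 1+(d-r)\cdot 0 = r$, as required.

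\smallskip

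\emph{Main obstacle.} The heart of the argument is Step~1: Step~2 is essentially a formality about tensor products of graded $K$-algebras and block monomial orders. The real challenge is to verify explicitly that a single $3$-variable Cohen--Macaulay ideal admits initial ideals of depths $0$ and $1$ simultaneously, meaning one must check both that the ``bad'' initial ideal acquires $(y_1,y_2,y_3)$ as an associated prime and that the ``good'' initial ideal admits a homogeneous non-zerodivisor of positive degree.
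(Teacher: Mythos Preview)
Your global strategy---a three-variable building block together with a tensor-product assembly in disjoint variable sets and block monomial orders---is exactly what the paper does, and your Step~2 is correct as stated. The paper even uses the same weight-vector mechanism you allude to in order to splice the $d$ local orders into a single monomial order on $S$.

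The genuine gap is in Step~1. You ask for a \emph{homogeneous} ideal $J\subset K[y_1,y_2,y_3]$, but the defining ideal of the $(t^3,t^4,t^5)$ monomial curve is not homogeneous in the standard grading: its minimal generators include $y_1^3-y_2y_3$ and $y_1^2y_2-y_3^2$, which mix degrees $3$ and $2$. So this candidate does not satisfy the hypothesis of the theorem, and the revlex heuristic you invoke (which relies on homogeneity to guarantee that the smallest variable is a non-zerodivisor modulo the initial ideal) has no a~priori reason to apply. You acknowledge that the verification of Step~1 is the crux, but the specific example you propose fails before one even gets to computing Gr\"obner bases.

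The paper instead takes the standard-graded ideal
\[
J=(x_1^2-x_2x_3,\; x_1x_2-x_3^2,\; x_1x_3-x_2^2)\subset K[x_1,x_2,x_3],
\]
for which $K[x_1,x_2,x_3]/J$ is one-dimensional Cohen--Macaulay. With the lexicographic order ($x_1>x_2>x_3$) one obtains $\ini(J)=(x_1^2,x_1x_2,x_1x_3,x_2^3)$, and since $x_1$ is annihilated by the maximal ideal the depth is $0$; with the weight order given by $\wb=(1,2,2)$ (refined by lex) one gets $\ini(J)=(x_2x_3,x_2^2,x_3^2)$, on which $x_1$ is a non-zerodivisor, so the depth is $1$. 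Plugging this $J$ into your Step~2 completes the argument. In fact, for this particular $J$ your lex/revlex intuition would also have worked: graded revlex with $x_1>x_2>x_3$ gives $\ini(J)=(x_1,x_2)^2$, again of depth~$1$.
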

\begin{proof}
	{\bf (First Step)} \, Let $d=1$.
	Let $S=K[x_1,x_2,x_3]$ and 
	\[
	I=(x_1^2-x_2x_3, x_1x_2-x_3^2,x_1x_3-x_2^2) \subset S
	\]
	(\cite[Example 3.3.6]{hhGTM}).
	Then $S/I$ is a one-dimensional Cohen--Macaulay ring.
	Let $\prec$ be the lexicographic order on $S$ with
	$x_3 \prec x_2 \prec x_1$.  Let, in addition, $\wb_0=(1,1,1)$ and $\wb_1=(1,2,2)$.
	For each $r \in \{0,1\}$, we introduce the monomial order $<_r$ on $S$ as follows:  One has $u <_r v$ if and only if one of the following holds:
		\begin{itemize}
		\item The weight of $u$ is less than that of $v$ with respect to $\wb_r$;
		\item The weight of $u$ is equal to that of $v$ with respect to $\wb_r$ and $u \prec v$.
		\end{itemize}
Then 
\[
\{x_1^2-x_2x_3, x_1x_2-x_3^2, x_1x_3-x_2^2, x_2^3-x_3^3\}
\]
is a Gr\"{o}bner basis of $I$ with respect to $<_0$ and
	${\rm depth} (S/{\rm in}_{<_0}(I)) = 0$.
	On the other hand, 
	\[
	\{x_2x_3-x_1^2, x_3^2-x_1x_2, x_2^2-x_1x_3\}
	\]
	is a Gr\"{o}bner basis of $I$ with respect to $<_1$ and ${\rm depth} (S/{\rm in}_{<_1}(I)) = 1$.
	
	\medskip
		{\bf (Second Step)}  \, Let $d>1$.
	Let $S=K[x_1,\ldots,x_{3d}]$ and 
	\[
I = (\{ \, x_{3i-2}^2-x_{3i-1}x_{3i}, \, \, x_{3i-2}x_{3i-1}-x_{3i}^2, \, \, x_{3i-2}x_{3i}-x_{3i-1}^2 \, : \, 1 \leq i \leq d \, \}) \subset S.
	\]
	Let $S_i=K[x_{3_i-2}, x_{3i-1}, x_{3i}]$ and 
	\[
	I_i = (x_{3i-2}^2-x_{3i-1}x_{3i}, x_{3i-2}x_{3i-1}-x_{3i}^2,x_{3i-2}x_{3i}-x_{3i-1}^2) \subset S_i,
    \]
	where $1 \leq i \leq d$.  Thus
	\[
S/I \cong S_1/I_1 \otimes_{K} \cdots \otimes_{K} S_d/I_d 
\]
and $S/I$ is a Cohen--Macaulay ring of dimension $d$.

Now, we employ the lexicographic order $\prec$ on $S$ with
\[
x_{3d} \prec x_{3d-1} \prec \cdots \prec x_1
\] 
on $S$ and the vectors
\[
\wb_r=(\underbrace{1,2,2, \ldots, 1,2,2}_{3r}, \underbrace{1,1,1,\ldots, 1,1,1}_{3(d-r)}) \in \ZZ^{3d}, \, \, \, \, \, 0 \leq r \leq d.
\]
	For each $0 \leq r \leq d$, we introduce the monomial order $<_r$ on $S$ as follows:  One has $u <_r v$ if and only if one of the following holds:
		\begin{itemize}
		\item The weight of $u$ is less than that of $v$ with respect to $\wb_r$;
		\item The weight of $u$ is equal to that of $v$ with respect to $\wb_r$ and $u \prec v$.
		\end{itemize}
It then follows that the set $A \cup B$, where
	\[
	A = \{ \, x_{3i-1} x_{3i} -x_{3i-2}^2, x_{3i-1}^2-x_{3i-2} x_{3i}, x_{3i}^2-x_{3i-2} x_{3i-1} \, : \, 1 \leq i \leq r \, \} 
	\] and \[
	B = \{ \, x_{3i-2}^2-x_{3i-1}x_{3i}, x_{3i-2}x_{3i-1}-x_{3i}^2, x_{3i-1}x_{3i}-x_{3i}^2, x_{3i-1}^3-x_{3i}^3 \, : \, r+1 \leq i \leq d \, \}, 
	\]
is a Gr\"{o}bner basis of $I$ with respect to $<_r$.
Since 
	\begin{eqnarray*}
	S/{\rm in}_{<_r}(I) & \cong &
\dfrac{K[x_1, x_2, x_3]}{(x_2 x_3, x_3^2, x_2^2)} \otimes_K \cdots \otimes_K \dfrac{K[x_{3r-1}, x_{3r-1}, x_{3r}]}{(x_{3r-1} x_{3r}, x_{3r}^2, x_{3r-1}^2)} \\ &&\otimes_K \dfrac{K[x_{3r+1}, x_{3r+2}, x_{3r+3}]}{(x_{3r+1}^2, x_{3r+1} x_{3r+2}, x_{3r+1} x_{3r+3}, x_{3r+2}^3)} \\ && \otimes_K \cdots \\
&& \otimes_K \dfrac{K[x_{3d-2}, x_{3d-1}, x_{3d}]}{(x_{3d-2}^2, x_{3d-2} x_{3d-1}, x_{3d-2} x_{3d}, x_{3d-1}^3)},
	\end{eqnarray*}
	one has ${\rm depth} (S/{\rm in}_{<_r}(I)) = r$, as desired.
\end{proof}

\begin{Remark}
{\em Let $S/I$ be the quotient ring studied in the proof of Theorem \ref{depth} and $\reg(S/I)$ its regularity.  On has $\reg(S/I) = d$.  Furthermore, it follows that
\[
{\rm depth} (S/{\rm in}_{<_r}(I)) + \reg(S/{\rm in}_{<_r}(I)) = 2d
\]
for each of $0 \leq r \leq d$. 
}
\end{Remark}

\section{Questions}
We conclude the present paper with related questions.

\medskip

\begin{Question}
{\em In Question \ref{question}, one may ask if $S/I$ can be a Gorenstein ring. 
}
\end{Question}

\begin{Question}
{\em In Question \ref{question}, one may ask if $I$ can be a prime ideal. 
}
\end{Question}

\begin{Question}
{\em Let $I \subset S = K[x_1, \ldots, x_n]$ be a homogeneous ideal with $\reg(S/I) = r$ and ${\rm depth} (S/I) = e$.  Suppose that there is a monomial order $\prec$ on $S$ with $\reg(S/{\rm in}_{\prec}(I)) = r'$ and ${\rm depth} (S/{\rm in}_{\prec}(I)) = e'$.  Then, for each $r \leq r'' \leq r'$ and for each $e' \leq e'' \leq e$, does there exist a monomial order $\prec'$ on $S$ with $\reg(S/{\rm in}_{\prec'}(I)) = r''$ and ${\rm depth} (S/{\rm in}_{\prec'}(I)) = e''$? 
}
\end{Question}

Let $I \subset S = K[x_1, \ldots, x_n]$ be the toric ideal of a {\em unimodular} convex polytope \cite[p.~107]{HHO}.  Thus, for {\em any} monomial order $\prec$ on $S$, its initial ideal ${\rm in}_{\prec}(I)$ is generated by squarefree monomials.  It then follows from \cite{CV} that $\reg(S/{\rm in}_{\prec}(I)) = \reg(S/I)$ and ${\rm depth} (S/I) = {\rm depth} (S/{\rm in}_{\prec}(I))$.

\begin{Question}
{\em Is there a nice class of toric ideals $I \subset S$ for which $\reg(S/{\rm in}_{\prec}(I)) = \reg(S/I)$ for {\em any} monomial order $\prec$ on $S$ and for which there is a monomial order $\prec$ on $S$ with  ${\rm depth} (S/{\rm in}_{\prec}(I)) < {\rm depth} (S/I)$?  
}
\end{Question}

Let $D$ be a finite distributive lattice and $\Rc_K[D]$ the ASL on $D$ over a field $K$ (\cite[p.~98]{Hibi87}).  It is known that $\Rc_K[D]$ is normal and Cohen--Macaulay.  Its discrete ASL is Cohen--Macaulay.  Let $S = K[\{x_\alpha : \alpha \in D\}]$ the polynomial ring in $|D|$ variables over $K$.  The defining ideal of $\Rc_K[D]$ is
\[
I_D = ( \, \{ \, x_\alpha x_\beta - x_{\alpha \wedge \beta} x_{\alpha \vee \beta} \, : \, \alpha \not\leq \beta, \, \beta \not\leq \alpha \, \} \, ) \subset S.
\] 

\begin{Question}
{\em For which finite distributive lattices $D$, does there exist a monomial order $\prec$ on $S = K[\{x_\alpha : \alpha \in D\}]$ with $\min{\rm depth} (S/{\rm in}_{\prec}(I_D)) = 0$?}
\end{Question}



\newpage

\end{document}